\newcommand{\caap}{\mathrm{cap}}
\newcommand{\TT}{\mathbb{T}}
\newcommand{\DD}{\mathbb{D}}
\DeclareMathOperator{\Hol}{Hol}
\newtheorem{thm}{Theorem}[section]
\newtheorem{lem}[thm]{Lemma}
\newtheorem{cor}[thm]{Corollary}
\newtheorem{rem}[thm]{Remark}
\newtheorem{prop}[thm]{Proposition}
\newcommand{\cD}{{\mathcal{D}}}
\renewcommand{\Re}{\mathrm{Re\;}}
\renewcommand{\Im}{\mathrm{Im\;}}
\newcommand{\T}{\mathbb{T}}
\newcommand{\Z}{\mathbb{Z}}
\newcommand{\D}{\mathbb{D}}
\numberwithin{equation}{section}
\begin{document}

\title[Zero sets]{On zero sets in the Dirichlet space}

\author{Karim Kellay}
\address{CMI, LATP,
         Universit\'e de Provence,
         39, Rue F. Joliot-Curie,
         13453 Marseille Cedex 13, France}
\email{kellay@cmi.univ-mrs.fr}

\author{Javad Mashreghi}
\address{D\'epartement de math\'ematiques et de statistique,
         Universit\'e Laval,
         Qu\'ebec, QC,
         Canada G1K 7P4.}
\email{javad.mashreghi@mat.ulaval.ca}

\date{}

\begin{abstract}
We study the zeros sets of functions in the Dirichlet space. Using Carleson formula for Dirichlet integral, we obtain some new families of zero sets. We also show that any closed subset of $E \subset \TT$ with logarithmic capacity zero is the accumulation points of the zeros of a function in the Dirichlet space.  The zeros satisfy a growth restriction which depends on $E$.
\end{abstract}

\subjclass[2010]{Primary: 30C15, Secondary: 30D50, 30D55, 31C25}

\keywords{Zero sets; Dirichlet space; logarithmic capacity.}

\thanks{This work was supported by NSERC (Canada), NRS and ANRDynop (France).}

\maketitle

\section{Introduction}

The {\em Dirichlet space}  $\cD$ consists of all analytic functions $f =\sum_{n\geq0} a_n z^n$, defined on the open unit disk $\DD$, such that
\[
\cD(f) := \int_\D |f'(z)|^2 \, dA(z) = \sum_{n=1}^\infty n \, |a_n|^2 <\infty,
\]
where $dA(z)=dxdy/\pi$ istands for the normalized area measure in $\DD$ .  A sequence $(z_n)_{n \geq 1}$ in $\D$ is called a {\em zero set} for $\cD$ provided that there is a function
$f \in \DD$, $f \not \equiv 0$, such that $f(z_n) = 0$, $n \geq 1$. Since $\cD$ is contained in the Hardy space $H^2(\D)$, the zero set $(z_n)_{n \geq 1}$ must satisfy the Blaschke conditions
\[
\sum_{n=1}^{\infty} (1-|z_n|) < \infty.
\]
However, the complete characterization of zero sets of $\cD$ is still an {\em open question}. For a short history of this topic, see \cite{ms-4}. Very briefly, let us mention that Shapiro--Shields \cite{SS62} improved an earlier result of Carleson \cite{Car52-2} by showing that
\begin{equation} \label{E:suuf-z-ss}
\sum_{n=1}^{\infty} \frac{1}{|\log(1-|z_n|)|}  < \infty
\end{equation}
is enough to ensure that $(z_n)_{n \geq 1}$ is a zero set for $\cD$. On the other hand, Nagel--Rudin--Shapiro \cite{NRS82} and Richter--Ross--Sundberg \cite{RRS04} showed that for any sequence $(r_n)_{n \geq 1}$ which do not do not fulfill \eqref{E:suuf-z-ss}, one can choose $(\theta_n)_{n \geq 1}$ such that $(r_ne^{i\theta_n})_{n \geq 1}$ is a uniqueness set for $\cD$. This line of research has been continued in  \cite{Bog96, Cau69,  ms-4, ms-5, PP}.

In this note, we use  Carleson's formula for the Dirichlet integral \cite{Car60} to obtain some new families of the zero sets which are not given by the classical theorems mentioned above. We highlight two main results below.

\begin{thm}\label{module}
Let $\omega:[0,2] \longrightarrow [0,\infty)$ be a continuous increasing function such that $\omega(0)=0$  and
\begin{equation}\label{cond_regularite}
\int_{\delta}^{2}\frac{\omega(t)}{t^2} \, dt = O\left( 1+\frac{\omega(\delta)}{\delta} \right), \qquad \delta \longrightarrow 0.
\end{equation}
 Let $E \subset \TT$ and suppose that there exists a function $f \in \cD$, $f\neq 0$, such that
\begin{equation}\label{cond-regulariteppp-0}
|f(\zeta)|^2\leq \omega \big( d(\zeta,E) \big), \qquad \text{a.e. on }  \TT.
\end{equation}
Then every Blaschke sequence $(z_n)_{n\geq 1}$ satisfying
\[
\sum_{n=1}^\infty \, \omega \big( 2d(z_n,E) \big) < \infty
\]
is a zero set for $\cD$.
\end{thm}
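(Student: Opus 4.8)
The plan is to produce the vanishing function explicitly as $g=Bf$, where $B$ is the Blaschke product with zero set $(z_n)_{n\ge1}$; this product converges because $(z_n)$ is assumed Blaschke. Since $f\not\equiv0$ we have $g\not\equiv0$, and $g(z_n)=0$ for every $n$, so the theorem follows once we show $g\in\cD$. This is exactly what Carleson's formula is for: for $f\in\cD$ with boundary function $f^*$ and $B$ a Blaschke product with zeros $(z_n)$, one has $Bf\in\cD$ if and only if
\[
\sum_{n\ge1}P\big[\,|f^*|^2\,\big](z_n)<\infty ,
\]
and then $\cD(Bf)=\cD(f)+\sum_n P[\,|f^*|^2\,](z_n)$, where $P[\cdot]$ is the Poisson integral. (Should this not be available directly, one applies Carleson's identity to the finite subproducts $B_N$ — whose boundary modulus equals $|f^*|$ a.e.\ since $|B_N^*|=1$ on $\TT$ — and lets $N\to\infty$, the partial sums being increasing and $\cD(\cdot)$ lower semicontinuous along the locally uniformly convergent sequence $B_Nf\to Bf$.) Hence everything reduces to showing $\sum_n P[\,|f^*|^2\,](z_n)<\infty$.

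By the hypothesis \eqref{cond-regulariteppp-0} and positivity of the Poisson kernel, $P[\,|f^*|^2\,](z)\le P[\,\omega(d(\cdot,E))\,](z)$ for all $z\in\DD$. So it is enough to prove the pointwise bound
\[
P\big[\,\omega(d(\cdot,E))\,\big](z)\ \le\ C\Big(\omega\big(2\,d(z,E)\big)+\big(1-|z|\big)\Big)\qquad(z\in\DD),
\]
with $C$ depending only on $\omega$ and on the implied constant in \eqref{cond_regularite}: summing over $(z_n)$ and invoking the assumption $\sum_n\omega(2d(z_n,E))<\infty$ together with the Blaschke condition $\sum_n(1-|z_n|)<\infty$ then gives $\sum_n P[\,|f^*|^2\,](z_n)<\infty$, so $g=Bf\in\cD$, and we are done.

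To prove the pointwise bound, fix $z\in\DD$, write $\delta=d(z,E)$, and note $1-|z|\le\delta$. Split $\TT$ into the cap $\Delta_0=\{\zeta:|\zeta-z|\le\delta\}$ and the annuli $\Delta_k=\{\zeta:2^{k-1}\delta<|\zeta-z|\le 2^{k}\delta\}$, $k\ge1$. The triangle inequality for the distance-to-$E$ function gives $d(\zeta,E)\le|\zeta-z|+\delta$, hence $d(\zeta,E)\le 2\delta$ on $\Delta_0$ and $d(\zeta,E)\le 2^{k+1}\delta$ on $\Delta_k$; since $\omega$ increases, the contribution of $\Delta_0$ to $P[\omega(d(\cdot,E))](z)$ is at most $\omega(2\delta)$ (because $P[\mathbf 1_{\Delta_0}](z)\le1$), while, using $1-|z|^2\le 2(1-|z|)$, $|\zeta-z|>2^{k-1}\delta$ on $\Delta_k$ and $|\Delta_k|\lesssim 2^{k}\delta$ (arc-length), the contribution of $\Delta_k$ is at most a constant times $(1-|z|)\,\omega(2^{k+1}\delta)/(2^{k}\delta)$. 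Summing in $k$ and comparing the dyadic series with an integral,
\[
\sum_{k\ge1}\frac{\omega(2^{k+1}\delta)}{2^{k}}\ \lesssim\ \delta+\delta\int_\delta^{2}\frac{\omega(t)}{t^{2}}\,dt ,
\]
which, by the regularity hypothesis \eqref{cond_regularite}, is $O(\delta+\omega(\delta))$. Substituting and using $1-|z|\le\delta$ again (to absorb $(1-|z|)\,\omega(\delta)/\delta\le\omega(\delta)\le\omega(2\delta)$) yields the asserted bound. (If $2\,d(z,E)>2$, read $\omega$ as extended to $[0,\infty)$ by $\omega(t)=\omega(2)$ for $t>2$; this leaves \eqref{cond_regularite} intact.)

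I expect the single delicate ingredient to be this last pointwise estimate. The reduction to $g=Bf$ and the use of Carleson's formula are formal, and the sum-to-integral comparison is routine; what makes things work is that \eqref{cond_regularite} is tailored precisely so that the Poisson integral of $\omega(d(\cdot,E))$, split over dyadic annuli receding from $E$, is controlled by the single value $\omega(2\,d(z,E))$ plus the harmless Blaschke term $1-|z|$. The main effort therefore lies in the careful bookkeeping of the dyadic decomposition, and in checking that the constant ``$2$'' appearing in the hypothesis $\sum_n\omega(2d(z_n,E))<\infty$ is indeed the one produced by the triangle inequality on the cap $\Delta_0$.
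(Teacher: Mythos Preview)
Your proof is correct and follows essentially the same route as the paper: construct $g=Bf$, apply Carleson's formula, and bound the Poisson integral of $\omega(d(\cdot,E))$ via a dyadic decomposition that produces $\int_\delta^2 \omega(t)\,t^{-2}\,dt$, which \eqref{cond_regularite} then controls. The only cosmetic difference is that the paper decomposes $\TT$ into level sets $\{2^n\delta\le d(\zeta,E)<2^{n+1}\delta\}$ of the distance to $E$ (bounding $|\zeta-z|$ from below on each), whereas you decompose into annuli $\{2^{k-1}\delta<|\zeta-z|\le 2^k\delta\}$ about $z$ (bounding $d(\zeta,E)$ from above); the resulting estimate is identical.
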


The function $\omega(t) = t$ is close to the borderline for the hypothesis of Theorem \ref{module}. The most demanding condition in this theorem is the existence of a function $f \in \cD$ which satisfies \eqref{cond-regulariteppp-0}. However, under some mild extra conditions, such a function can be explicitly constructed. In Section \ref{S:examp}, for {\em Carleson sets} or for {\em sets of logarithmic capacity zero} we construct a function $f$ which fulfills \eqref{cond_regularite} and \eqref{cond-regulariteppp-0}. This is analogue to the result of that Pau-- Pel\'aez \cite[Theorem 1]{PP} about the zeros of function in the Dirichlet type space
$$\cD_\alpha:=\Big\{f\inÊ\Hol(\DD): \int_\DD|f'(z)|^2(1-|z|^2)^\alpha dA(z)<\infty\Big\}$$
for $0<\alpha<1$, in which more conditions are required on the zero set.

In order to state our second result, we introduce the notion of logarithmic capacity.
The {\em energy} of a Borel probability measure $\mu$ on $\TT$ is defined by
\[
I(\mu) = \int\int \log\frac{1}{|\zeta - \xi|} \, d\mu(\zeta) \, d\mu(\xi).
\]
A simple calculation shows that
\[
I(\mu)=\sum_{n=1}^\infty \frac{|\widehat{\mu}(n)|^2}{n},
\]
where $\big( \widehat{\mu}(n) \big)_{n \in \Z}$ is the sequence of Fourier coefficients of $\mu$. Given a Borel subset $E$ of $\TT$, we denote by $\mathcal{P}(E)$ the set of all probability measures supported on a compact subset of $E$. Then we define the {\em logarithmic capacity} of $E$ by
\[
\caap (E) = 1/\inf\{I(\mu) \text{ : } \mu \in \mathcal{P}(E) \}.
\]
According to a well-known result of Beurling \cite{C}, for each function $f\in \cD$, the radial limits $f^*(\zeta)=\lim_{r \to 1-}f(r\zeta) $ exist q.e. on $\TT$, that is
\[
\caap \big(\, \{\zeta\in \TT\text{ : } f^*(\zeta) \text{ does not  exist}\} \,\big)=0.
\]

If $E$ is any closed subset of $\TT$, we can take $(r_n)_{n \geq 1}$ to be any positive sequence which satisfies the Shapiro--Shields condition \eqref{E:suuf-z-ss}, and then adjust the arguments $(\theta_n)_{n \geq 1}$ such that $(r_ne^{i\theta_n})_{n \geq 1}$ accumulates precisely at all points of $E$. But such a sequence converges very rapidly to the boundary. However, for a closed set of logarithmic capacity zero,  using Theorem \ref{module}, we can construct a sequence which fulfils the above property and at the same time grows much slower to its accumulation points on $\T$. See also Remark \ref{rem}.

For any subset $E$ of  $\TT$, we define
\[
E_t=\{\zeta\in \TT : d(\zeta,E)\leq t\}, \qquad t>0.
\]

\begin{thm}\label{capacity}
Let $E$ be a closed subset of  $\TT$ with logarithmic capacity zero. Let $\psi$ be any positive continuous decreasing function  such that
$\psi(x) \longrightarrow 0$  as $x \longrightarrow +\infty$ and $\int^{+\infty }\psi(x)\, x\, dx<\infty$, and then define
\[
\omega(t) = \exp\big(-e^{ \psi^{-1}(\caap(E_t))}\big).
\]
Let $(r_n)_{n \geq 1}$ be any positive sequence which satisfies the growth restriction
\begin{equation}\label{blas}
\sum_{n=1}^{\infty} (1-r_{n}) \int_{2(1-r_{n})}\frac{\omega(t)}{t^2} \, dt<\infty.
\end{equation}
Then we can choose the arguments $(\theta_n)_{n \geq 1}$ such that $(r_ne^{i\theta_n})_{n \geq 1}$ is a zero set for $\cD$, and the set of its accumulation points on $\TT$ is precisely equal to $E$.
\end{thm}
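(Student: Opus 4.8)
The plan is to reduce Theorem~\ref{capacity} to Theorem~\ref{module} by placing each point $r_ne^{i\theta_n}$ radially over a point of $E$. I would first dispose of the preliminaries on the weight $\omega$. Since $E$ is closed with $\caap(E)=0$ and the compact sets $E_t$ decrease to $E$ as $t\downarrow0$, continuity of logarithmic capacity from above along these decreasing compacta gives $\caap(E_t)\to0$; hence $\psi^{-1}(\caap(E_t))\to+\infty$ and $\omega(t)\to0$ as $t\to0$, while the monotonicity of $t\mapsto\caap(E_t)$ and of $\psi^{-1}$ makes $\omega$ increasing. Thus, with $\omega(0):=0$, $\omega$ is a weight of the kind occurring in Theorem~\ref{module}. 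The two substantive facts about this particular $\omega$ — that it satisfies the regularity condition \eqref{cond_regularite}, and that there exists $f\in\cD$, $f\neq0$, with $|f(\zeta)|^2\le\omega(d(\zeta,E))$ a.e.\ on $\TT$ — are exactly what is established, for closed sets of logarithmic capacity zero, in Section~\ref{S:examp}, and I would simply invoke that construction.

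Next, the choice of arguments. I would fix a countable subset $\{\xi_j\}_{j\ge1}$ of $E$ that is dense in the compact set $E$, and choose $n\mapsto\theta_n$ so that every $e^{i\theta_n}$ lies in $\{\xi_j:j\ge1\}$ and every $\xi_j$ is attained by infinitely many $n$ (a diagonal enumeration). Put $z_n=r_ne^{i\theta_n}$; we may assume $r_n\to1$, which is forced since a zero set for $\cD\subset H^2$ satisfies the Blaschke condition. Because $e^{i\theta_n}\in\TT$ we have $|z_n-e^{i\theta_n}|=1-r_n$ and $|z_n-w|\ge1-|z_n|=1-r_n$ for every $w\in\TT$, so $d(z_n,E)=1-r_n$. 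The elementary estimate
\[
\int_{2\delta}\frac{\omega(t)}{t^2}\,dt \ge \int_{2\delta}^{4\delta}\frac{\omega(t)}{t^2}\,dt \ge \omega(2\delta)\int_{2\delta}^{4\delta}\frac{dt}{t^2} = \frac{\omega(2\delta)}{4\delta}\qquad\Big(0<\delta\le\tfrac12\Big),
\]
applied with $\delta=1-r_n$ and summed, yields
\[
\sum_{n\ge1}\omega\big(2d(z_n,E)\big)=\sum_{n\ge1}\omega\big(2(1-r_n)\big)\le 4\sum_{n\ge1}(1-r_n)\int_{2(1-r_n)}\frac{\omega(t)}{t^2}\,dt<\infty
\]
by \eqref{blas}; the same hypothesis also gives $\sum_n(1-r_n)<\infty$ (for large $n$, $\int_{2(1-r_n)}\omega(t)t^{-2}\,dt\ge\int_1^2\omega(t)t^{-2}\,dt>0$), so $(z_n)$ is a Blaschke sequence. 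Theorem~\ref{module} therefore applies and shows that $(z_n)$ is a zero set for $\cD$. For the accumulation set on $\TT$: any accumulation point $\zeta$ is a limit $z_{n_k}\to\zeta$, hence $e^{i\theta_{n_k}}=z_{n_k}/r_{n_k}\to\zeta$, so $\zeta\in E$ since $E$ is closed; conversely each $\xi_j$ is the limit of $r_n\xi_j$ over the infinitely many $n$ with $e^{i\theta_n}=\xi_j$, so $\xi_j$ is an accumulation point, and the set of accumulation points, being closed and containing the dense subset $\{\xi_j\}$, equals $E$.

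Everything above is bookkeeping; the real work is the input quoted from Section~\ref{S:examp}, and I expect that to be the main obstacle. There one takes $f$ to be the outer function with $\log|f^*(\zeta)|=-\tfrac12\,e^{\psi^{-1}(\caap(E_{d(\zeta,E)}))}$, so that \eqref{cond-regulariteppp-0} holds with equality a.e., and one must prove $f\in\cD$ by estimating $\cD(f)$ through Carleson's formula~\cite{Car60}, that is, by bounding a double integral of $|e^{is}-e^{it}|^{-2}|f^*(e^{is})-f^*(e^{it})|^2$ over $\TT\times\TT$. It is precisely here that the shape of $\omega$ and the hypotheses on $\psi$ — decreasing, $\psi(+\infty)=0$, and above all $\int^{+\infty}\psi(x)\,x\,dx<\infty$ — are used to force convergence of that double integral and to secure \eqref{cond_regularite} for $\omega$; controlling that Carleson-type double integral in terms of the capacities $\caap(E_t)$ of the neighbourhoods of $E$ is the only genuinely delicate point.
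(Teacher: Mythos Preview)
Your reduction-to-the-lemma strategy is right in spirit, and the bookkeeping (choice of arguments over a dense subset of $E$, $d(z_n,E)=1-r_n$, the bound $\omega(2\delta)\le 4\delta\int_{2\delta}\omega(t)t^{-2}\,dt$, accumulation set $=E$) is fine. But there are two genuine gaps.

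First, you invoke Theorem~\ref{module}, which requires the regularity condition \eqref{cond_regularite} on $\omega$. Nothing in Section~\ref{S:examp} establishes \eqref{cond_regularite} for the weight $\omega(t)=\exp(-e^{\psi^{-1}(\caap(E_t))})$, and it is unclear that it even holds in general: $t\mapsto\caap(E_t)$ is a monotone function of $t$ with no a~priori doubling or regularity. The paper does \emph{not} go through Theorem~\ref{module}; it applies Lemma~\ref{T:zeroset-phi} directly. That lemma asks for
\[
\sum_n\Big(\omega(2d(z_n,E))+(1-|z_n|)\int_{2d(z_n,E)}\frac{\omega(t)}{t^2}\,dt\Big)<\infty,
\]
and with $d(z_n,E)=1-r_n$ both pieces are controlled by the hypothesis \eqref{blas} (the first via the very inequality you wrote down). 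So the fix is simply to cite Lemma~\ref{T:zeroset-phi} instead of Theorem~\ref{module}; then \eqref{cond_regularite} is never needed.

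Second, the construction of $f\in\cD$ with $|f|^2\le\omega(d(\cdot,E))$ is not in Section~\ref{S:examp}, and your guess at how it goes is off. It is not an outer function with prescribed modulus checked by the Besov double integral. The paper uses a converse strong-type inequality for capacity (\cite[Theorem~5.1]{EKR0}): since $\eta(t):=\psi^{-1}(\caap(E_t))$ satisfies $\int_0\caap(E_t)\,|d\eta^2(t)|\asymp\int^{+\infty}\psi(x)\,x\,dx<\infty$, there exists $\varphi\in\cD$ with $\Re\varphi(\zeta)\ge\eta(d(\zeta,E))$ and $|\Im\varphi|<\pi/4$ q.e.\ on $\TT$. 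One then sets $f=\exp\big(-\tfrac{\sqrt2}{2}\,e^{\varphi}\big)$; the bound $|\Im\varphi|<\pi/4$ persists inside $\DD$ by harmonicity, and a one-line calculus estimate $|f'|^2\le C|\varphi'|^2$ gives $f\in\cD$, while $|f(\zeta)|^2\le\exp(-e^{\eta(d(\zeta,E))})=\omega(d(\zeta,E))$. This is where the hypotheses on $\psi$ are actually used, and it is much simpler than the Carleson double-integral estimate you anticipated.
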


In Section 4, we show that if $(z_n)_{n \geq 1}$ is a zero sequence satisfying the Shapiro--Shields condition  \eqref{E:suuf-z-ss}, then  the angular derivative of the Blaschke product formed with $(z_n)_{n \geq 1}$ exists--q.e.  On the other direction, using Theorem \ref{module}, we give a zero sequence $(z_n)_{n \geq 1}$ for $\cD$ such that the angular derivative of the corresponding Blaschke product exists precisely on $\TT\backslash E$, where $E$ is a set with $\caap(E)>0$.

\section{Proofs}
For the proofs of above theorems, we need the following key lemma.

\begin{lem}\label{T:zeroset-phi}
Let $\omega:[0,2] \longrightarrow [0,\infty)$ be a continuous increasing function such that $\omega(0)=0$. Let $E \subset \TT$ and suppose that there exists a function $f \in \cD$, $f\neq 0$, such that
\[
|f(\zeta)|^2\leq \omega \big( d(\zeta,E) \big), \qquad \text{a.e. on }  \TT.
\]
Then every Blaschke sequence $(z_n)_{n\geq 1}$ satisfying
\[
\sum_{n=1}^\infty \left(\, \omega \big( 2d(z_n,E) \big) + (1-|z_n|)\int_{2d(z_n,E)}\frac{\omega(t)}{t^2} \, dt \,\right) < \infty
\]
is a zero set for $\cD$.
\end{lem}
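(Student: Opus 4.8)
The plan is to use the Shapiro--Shields functional-analytic scheme for producing zero sets, reduced to an estimate on the Dirichlet norm of the function $f$ composed with a single Blaschke factor, or rather to a direct construction: multiply $f$ by a Blaschke product whose zeros are $(z_n)$ and show the result stays in $\cD$. Concretely, let $B$ be the Blaschke product with zeros $(z_n)_{n\geq 1}$ and set $g = fB$. Since $|B|\le 1$ on $\DD$ and $|B|=1$ a.e. on $\TT$, we have $g\in H^2$ with $|g^*| = |f^*|$ a.e.\ on $\TT$, so the only issue is to control $\cD(g)=\int_\DD |g'|^2\,dA$. I would estimate $\cD(fB)$ via Carleson's formula for the Dirichlet integral (cited as \cite{Car60}), which expresses $\cD(h)$ for $h\in\cD$ in terms of a double integral over $\TT\times\TT$ of $|h^*(\zeta)-h^*(\xi)|^2/|\zeta-\xi|^2$ plus boundary/jump terms, or in the form adapted to $h = fB$. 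The point of Carleson's formula is that it converts the area integral into a boundary expression that interacts nicely with the pointwise bound \eqref{cond-regulariteppp-0}.

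The key steps, in order, would be: first, write Carleson's formula for $g=fB$; because $B$ is inner, the contribution splits into a term involving $f$ alone and cross terms that can be bounded by the jump of $B$ across $\TT$ near each $z_n$. Second, for each zero $z_n$, localize: the single Blaschke factor $b_n(z) = (|z_n|/z_n)(z_n-z)/(1-\bar z_n z)$ has $|b_n^*(\zeta)|=1$ on $\TT$ but its "weight" (the harmonic measure / derivative) is concentrated in an arc of length comparable to $1-|z_n|$ around $z_n/|z_n|$. The contribution of $b_n$ to the Dirichlet integral of $fb_n$, after using $|f^*(\zeta)|^2\le \omega(d(\zeta,E))$, should be bounded by a constant times
\[
\omega\big(2d(z_n,E)\big) + (1-|z_n|)\int_{2d(z_n,E)}\frac{\omega(t)}{t^2}\,dt,
\]
which is exactly the summand in the hypothesis. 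The two pieces arise naturally: the first from the part of the arc where $d(\zeta,E)$ is comparable to $d(z_n,E)$ (using monotonicity of $\omega$, $d(\zeta,E)\le 2d(z_n,E)$ there when $\zeta$ lies in the arc of length $1-|z_n|$ and $1-|z_n|\le d(z_n,E)$, roughly), and the second from integrating the Poisson-type kernel of $b_n$ against $\omega(d(\zeta,E))$ over the rest of $\TT$, where $d(\zeta,E)\ge d(z_n,E)$ and the kernel decays like $(1-|z_n|)/|\zeta - z_n/|z_n||^2$.

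Third, I would sum over $n$: the telescoping/iteration of multiplying by one Blaschke factor at a time, together with the summability hypothesis, shows $\cD(fB)<\infty$; here one must be slightly careful that the cross terms between different factors $b_n, b_m$ are also controlled, which is where I expect the main obstacle to lie --- one typically handles this by an inductive argument (Shapiro--Shields style), bounding $\cD(f b_1\cdots b_N)$ by $\cD(f b_1\cdots b_{N-1})$ plus the $N$-th summand, using that $|f b_1\cdots b_{N-1}|$ still satisfies a bound of the same type (indeed $\le \omega(d(\zeta,E))$ a.e.\ on $\TT$ since the extra factors are unimodular there). The delicate point is that passing from $f b_1\cdots b_{N-1}$ to the next factor requires the a.e.\ boundary bound to survive, which it does, but the Carleson-formula cross term must be estimated uniformly in $N$; controlling it by the single summand above, independently of the other zeros, is the crux. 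Once the partial products have uniformly bounded Dirichlet norm, a normal-families / weak-compactness argument in $\cD$ produces the limit $fB\in\cD$, which is the desired nonzero function vanishing on $(z_n)$.
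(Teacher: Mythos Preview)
Your overall strategy---multiply $f$ by the Blaschke product $B$ and use Carleson's formula to control $\cD(fB)$---is exactly what the paper does. However, you have not pinned down the right version of Carleson's formula, and this leads you to introduce machinery (induction on the factors, cross-term worries, normal families) that is entirely unnecessary.

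The form of Carleson's formula needed here is
\[
\cD(Bf) \;=\; \cD(f) \;+\; \frac{1}{2\pi}\int_\TT \left(\sum_{n=1}^\infty \frac{1-|z_n|^2}{|\zeta-z_n|^2}\right)|f(\zeta)|^2\,|d\zeta|,
\]
not the Douglas-type double integral you describe. This identity already decouples the contribution of each zero $z_n$: there are no cross terms between different $b_n$'s, and no inductive or compactness argument is required. One simply estimates, for each fixed $z=z_n$, the single integral $\int_\TT \frac{1-|z|^2}{|\zeta-z|^2}\,\omega(d(\zeta,E))\,|d\zeta|$ and sums.

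For that estimate, your localization is also off. The relevant scale is $d(z,E)$, not the arc of length $1-|z|$ around $z/|z|$. The paper splits $\TT$ according to the size of $d(\zeta,E)$ relative to $d(z,E)$: on the set where $d(\zeta,E)\le 2\,d(z,E)$ one bounds $\omega(d(\zeta,E))\le \omega(2d(z,E))$ and integrates the Poisson kernel over all of $\TT$, producing the term $\omega(2d(z,E))$ after multiplying by $1-|z|^2$. On the set where $d(\zeta,E)\ge 2\,d(z,E)$, one uses a dyadic decomposition $\Gamma_z^n=\{2^n d(z,E)\le d(\zeta,E)<2^{n+1}d(z,E)\}$; the triangle inequality gives $|\zeta-z|\ge 2^{n-1}d(z,E)$ there, so each shell contributes at most a constant times $\omega(2^{n+1}d(z,E))/(2^{n+1}d(z,E))$, and summing these is comparable to $\int_{2d(z,E)}^2 \omega(t)/t^2\,dt$. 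Multiplying by $1-|z|^2$ gives the second term in the summand. Your sketch gestures at this but conflates the two scales $1-|z_n|$ and $d(z_n,E)$.
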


\begin{proof}Fix $z \in \D$. Let
\[
\Gamma= \{\zeta \in \T :  d(\zeta, E)\geq d(z,E)\},
\]
and consider the decomposition
\[
\Gamma = \bigcup_{n=0}^{\infty} \Gamma_z^n,
\]
where
\[
\Gamma_z^n = \left\{\, \zeta \in \T:  2^n d(z,E)\leq d(\zeta,E) <  2^{n+1}d(z,E) \,\right\}.
\]
On $\Gamma_z^0$, we have
\begin{eqnarray*}
\int_{\Gamma_z^0} \frac{|f(\zeta)|^2}{|\zeta-z|^2} \,\, |d\zeta|
&\leq& \int_{\Gamma_z^0 } \frac{\omega(\, d(\zeta,E))}{|\zeta-z|^2} \,\, |d\zeta| \\
&\leq&  \omega(2d(z,E)) \, \int_\T\frac{|d\zeta|}{|\zeta-z|^2} \\
&=& 2\pi \frac{\omega(2d(z,E))}{1-|z|}.
\end{eqnarray*}

Let $\zeta_0$ be the closest point of $E$ to $z$. Then, for each $\zeta \in \Gamma_z^n$, $n \geq 1$, we have
\begin{eqnarray*}
|\zeta-z| &\geq& |\zeta-\zeta_0|-|\zeta_0-z|\\
&\geq& d(\zeta,E)-d(z,E)\\
&\geq& 2^{n-1} \, d(z,E).
\end{eqnarray*}
Hence,  $\Gamma_n^z\subset \{\zeta\in\T : |\zeta-z|\geq 2^{n-1}d(z,E)\}$, and  we obtain
\begin{eqnarray*}
\int_{\Gamma_z^n } \frac{|f(\zeta)|^2}{|\zeta-z|^2} \,\, |d\zeta|
&\leq& \int_{\Gamma_z^n }
\frac{\omega(d(\zeta,E))}{|\zeta-z|^2} \,\, |d\zeta|\\
&\leq&  {\omega( 2^{n+1}d(z,E))}  \int_{\Gamma_z^n }
\frac{|d\zeta|}{|\zeta-z|^2}  \\
&\leq&  {\omega( 2^{n+1}d(z,E))}  \int\limits_{ \{\zeta\; : |\zeta-z|\geq 2^{n-1}d(z,E)\}  }
\frac{|d\zeta|}{|\zeta-z|^2} \\
&\leq& {\omega( 2^{n+1}d(z,E))}  \int_{ 2^{n-1}d(z,E)  }^{\infty}
\frac{dx}{x^2} \\
&=& 4\;\frac{\omega( 2^{n+1}d(z,E))}{ 2^{n+1}d(z,E)  }.
\end{eqnarray*}
Therefore,
\begin{eqnarray} \label{EE2}
\int_{\Gamma} \frac{|f(\zeta)|^2}{|\zeta-z|^2} \, |d\zeta|  &=&
\sum_{n=0}^{\infty} \int_{\Gamma_z^n} \frac{|f(\zeta)|^2}{|\zeta-z|^2} \, |d\zeta| \nonumber \\
&\leq& 2\pi \frac{\omega(2d(z,E))}{1-|z|}+4 \sum_{n=1}^{\infty} \frac{\omega(2^{n+1}d(z,E))}{ 2^{n+1}d(z,E)  } \nonumber\\
&\leq&2\pi \frac{\omega(2d(z,E))}{1-|z|}+ 4\int_{2d(z,E)}^{2}\frac{\omega(t)}{t^2} \, dt.
\end{eqnarray}
On the other hand, on $\T \setminus \Gamma = \{\zeta \in \T : d(\zeta, E) < d(z,E)\}$, we have
\begin{eqnarray}
\int_{\T \setminus \Gamma} \frac{|f(\zeta)|^2}{|\zeta-z|^2} \, |d\zeta| &\leq& \int_{\T \setminus \Gamma}
\frac{\omega( d(\zeta,E))}{|\zeta-z|^2} \, |d\zeta| \nonumber\\ %
&\leq& \omega (d(z,E)) \int_{\T} \frac{|d\zeta|}{|\zeta-z|^2} \nonumber\\ %
&=& 2\pi  \frac{\omega \left( d(z,E) \right)}{1-|z|}. \label{E2}
\end{eqnarray}
Hence, by \eqref{EE2} and \eqref{E2},
\[
\sum_{n=1}^{\infty} (1-|z_n|^2) \, \int_\T \, \frac{|f(\zeta)|^2}{|\zeta-z_n|^2}  \, |d\zeta| \leq 4\pi
 \, \sum_{n=1}^{\infty} \omega (2d(z_n,E)) + 4 (1-|z_n|^2)   \int_{2d(z_n,E)}^{2}\frac{\omega(t)}{t^2}   <\infty.
\]
Let $B$ be the Blaschke product formed by $(z_n)_{n \geq 1}$. It follows from Carleson's formula
\cite{Car60} that
\begin{eqnarray}\label{Carleson-formula}
\cD(Bf) & = & \cD(f) +  \frac{1}{2\pi}\int_\T \,  \sum_{n=1}^\infty  \frac{1-|z_n|^2}{|\zeta-z_n|^2} |f(\zeta)|^2\, |d\zeta| < \infty.
\end{eqnarray}
Thus, $(z_n)_{n \geq 1}$ is a zero set for the Dirichlet space.
\end{proof}

\subsection*{Proof of Theorem \ref{module}}
Since
\begin{eqnarray*}
(1-|z_n|^2) \int_{2d(z_n,E)}\frac{\omega(t)}{t^2} \, dt &\leq& c\; (1-|z_n|^2)+  c\; (1-|z_n|^2) \frac{\omega(2d(z_n,E))}{2d(z_n,E)}\\
&\leq& c\; (1-|z_n|^2)+  c\; \omega(2d(z_n,E))
\end{eqnarray*}
for some constant $c>$ and $(z_n)_{n \geq 1}$ is a Blaschke sequence, the proof follows immediately from Lemma  \ref{T:zeroset-phi}.    \hfill $\square$

\subsection*{Proof of Theorem \ref{capacity}}
Since $\caap (E)=0$, we have $\caap(E_t) \longrightarrow 0$ as $t \longrightarrow 0$. Set
\begin{equation}
\label{eta}
\eta(t)=\psi^{-1}(\caap(E_t)).
\end{equation}
 Then  $\eta(t) \longrightarrow \infty$, as $t \longrightarrow 0$, and
\[
\int_0 \caap(E_t) \, |d\eta^2(t)| \asymp \int_0 \psi(\eta(t)) \, |d\eta^2(t)| \asymp \int^{+\infty}\psi(x) \, xdx< \infty.
\]
Hence, by \cite[Theorem 5.1]{EKR0}, a converse of strong-type estimate for capacity,  there exists a function $\varphi\in \cD$ such that
\[
\Re \varphi(\zeta) \geq \eta (d(\zeta , E)) \quad \text{ and } \quad |\Im \varphi (\zeta)|<\pi /4, \qquad  \text{q.e. on \;\;}  \TT.
\]
Define
$$f =  \exp \big(-\frac{\sqrt{2}}{2} \, e^{\varphi} \big).$$ Note that  by harmonicity $|\Im \varphi(z)|\leq \pi/4$ on $\DD$ and
\begin{eqnarray*}
\int_\DD|f'(z)|^2dA(z)&=&
\frac{1}{2}\int_\DD|\varphi'(z)|^2 e^{2\Re \varphi(z)}e^{-\sqrt{2}\cos(\Im \varphi(z))e^{\Re \varphi(z)}}dA(z)\\
&\leq& \int_\DD|\varphi'(z)|^2 e^{2\Re \varphi(z)}e^{-e^{\Re \varphi(z)}}dA(z)\\
&\leq& \sup_{x\geq 0}( x^2e^{-x}) \int_\DD|\varphi'(z)|^2 \, dA(z).
\end{eqnarray*}
Hence $f \in \cD$. Also, we have

\[
|f(\zeta)|^2\leq \exp \big(-e^{ \eta (d(\zeta , E))} \big) = \omega(d(\zeta,E)), \qquad \zeta \in \TT.
\]
Write
\[
\TT \setminus E=\bigcup_{k \geq 1} I_k,
\]
where $I_k= (e^{i\theta_{2k}},e^{i\theta_{2k+1}})$ are the complementary intervals of the closed set $E$. Note that $\overline{\{e^{i\theta_k}: k \geq 1\}}=E$. Let $(r_{n,k})_{n,k \geq 1}$ be a two-indexed enumeration of $(r_{n})_{n \geq 1}$. Then put
\[
z_{n,k}=r_{n,k} \, e^{i\theta_k}, \qquad n,k \geq 1.
\]
Since on each fixed ray $\theta=\theta_k$, there are infinitely many zeros $z_{n,k}$ which tend to the point $e^{i\theta_k}$, we have  $\overline{\{z_{n,k} : n,k \geq 1\} }\cap \TT=E$. Moreover, $d(z_{n,k},E)=1-r_{n,k}$ and
\begin{eqnarray*}
\frac{\omega((2d(z_{n,k},E))}{1-r_{n,k}}&=& \frac{\omega(2(1-r_{n,k}))}{1-r_{n,k}}\\
&\leq&2\int_{2(1-r_{k,n})}\frac{\omega(t)}{t^2} \, dt\\
&=& 2\int_{2d(z_{n,k},E)}\frac{\omega(t)}{t^2} \, dt.
\end{eqnarray*}
The result now follows from Lemma \ref{T:zeroset-phi}. \hfill $\square$

\begin{rem}
{\rm Note that since $\omega$ is increasing and $\omega(0)=0$, we have
\[
\delta\int_{2\delta}^2\frac{\omega(t)}{t^2}dt=\delta\int_{2\delta}^{\sqrt{\delta}}\frac{\omega(t)}{t^2}dt+\delta\int_{\sqrt{\delta}}^2\frac{\omega(t)}{t^2}dt\leq \omega(\sqrt{\delta})+\sqrt{\delta} \, \omega(2) \longrightarrow 0, \quad \delta \longrightarrow 0.
\]
Hence, we there are $(r_{n})_{n \geq 1}$ which satisfy \eqref{blas}.}
\end{rem}

\section{Some specific examples} \label{S:examp}

Let $E$ be a closed subset of $\TT$ with Lebesgue measure zero, and write $\TT \setminus E = \cup_{n \geq 1} I_n$, where $I_n$'s are the complementary intervals of $E$. Then we have
\begin{equation} \label{E:log-int-00}
\int_\T  \, |\log d(\zeta, E)| \,\, |d\zeta| < \infty \Longleftrightarrow \sum_{n=1}^{\infty} |I_n| \, \log1/|I_n|<\infty.
\end{equation}
See \cite{C}. Such a set is called a {\em Carleson set}. The condition \eqref{E:log-int-00} gives the motivation to consider the closed sets $E$ which satisfy
\begin{equation} \label{E:log-int}
\int_\T  \, |\log \sigma(d(\zeta, E))| \,\, |d\zeta| < \infty,
\end{equation}
where $\omega$ is a positive continuous function. In this situation, the well-known formula
\[
f_{\sigma,E}(z) = \exp \bigg( \int_{\TT} \frac{\zeta+z}{\zeta-z} \,\, \log \sigma(d(\zeta, E)) \,\,\frac{ |d\zeta|}{2\pi} \bigg)
\]
provides an outer function such that whose boundary values $f_{\sigma,E}^*$ satisfy
\[
|f_{\sigma,E}^*(\zeta)| = \sigma(d(\zeta,E)), \qquad \text{a.e. on } \T.
\]
There are several results which show that, under certain conditions on $\omega$ and $E$, this outer function belongs to a specific function space \cite{EKR}. Some relevant cases are treated below for  the generalized Cantor set.

Let us briefly describe the construction of the  generalized Cantor set on $\T$. Put $E_0 = \TT$ and $\ell_0 = 2\pi$. Let $(\lambda_n)_{n \geq 1}$ be a decreasing sequence of positive numbers. We remove an interval of length $\lambda_1$ from the middle of $E_0$. Denote the union of two remaining intervals by $E_1$, and denote the length of each interval in $E_1$ by $\ell_1$. Then we remove two intervals, each of length $\lambda_2$, from the middle of intervals in $E_1$. Let $E_2$ denote the union of the resulting four pairwise disjoint intervals of equal length $\ell_2$. Continuing this procedure, we obtain in the $n$-th  step a compact set $E_n$ which is the union of $2^n$ closed intervals of length equal to $\ell_n$. According to the construction procedure, we must have
\begin{equation} \label{E:rel-ell-lamba}
2\ell_n+\lambda_n=\ell_{n-1}.
\end{equation}
For the reference, denote the intervals of $E_n$ by denoted by $(I_{n,m})_{1\leq m\leq 2^n}$. The compact set $E=\bigcap_{n\geq 1} E_n $ is called the {\em generalized Cantor set}. It is easy to verify that $E$ has Lebesgue measure zero if and only if
\[
\sum_{n=1}^\infty \, 2^{n-1} \, \lambda_n = 2\pi.
\]
The generalized Cantor set has logarithmic capacity zero if and only if
\begin{equation}\label{capacite-cantor}
\sum 2^{-n}\log 1/\ell_n =\infty.
\end{equation}
See \cite[Chapter IV, Theorems 2 and 3]{CL}. Moreover, it becomes a Carleson set if and only if
\begin{equation} \label{E:carl-cant}
\sum_{n=1}^\infty \, 2^n \, \lambda_n \log 1/\lambda_n < \infty.
\end{equation}
The set $E$ is called a {\em perfect symmetric set} with constant ratio $\ell \in (0,1/2)$ if $\ell_n=\ell^n$.   In this case, by \eqref{E:rel-ell-lamba},
\begin{equation} \label{E:form-lam}
\lambda_n= \ell^{n-1}(1-2\ell).
\end{equation}
The classical Cantor set corresponds to $\ell = 1/3$.\\

\noindent {\bf Example 1:} Let $E$ be any Carleson set, and let  $\sigma (t)= t^{\alpha}$ with $\alpha > 1/2$. Then, according to \cite[Corollary 4.2]{EKR}, we have  $f_{\sigma,E}\in \cD$.  Since $d(z,E)\leq (1-|z|)+d(z/|z|,E)$, we deduce the following result from Theorem \ref{module}.

\begin{cor} \label{cor1}
Let $E$ be any Carleson set. Then each Blaschke sequence $(r_ne^{i\theta_n})_{n\geq 1}$ satisfying
\begin{equation} \label{Carlesonzero}			
\sum_{n=1}^\infty \, d(e^{i\theta_n},E)^{2\alpha} < \infty,
\end{equation}
for some exponent $\alpha>1/2$,  is a zero set for $\cD$.
\end{cor}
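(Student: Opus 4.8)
The plan is to deduce Corollary~\ref{cor1} directly from Theorem~\ref{module} by verifying its two hypotheses for the choice $\omega(t) = t^{2\alpha}$ with $\alpha > 1/2$. First I would check the regularity condition \eqref{cond_regularite}: for $\omega(t) = t^{2\alpha}$ with $2\alpha > 1$, a direct computation gives
\[
\int_{\delta}^{2} \frac{\omega(t)}{t^2} \, dt = \int_{\delta}^{2} t^{2\alpha - 2} \, dt = \frac{2^{2\alpha-1} - \delta^{2\alpha-1}}{2\alpha - 1} \leq \frac{2^{2\alpha-1}}{2\alpha-1} = O(1),
\]
since $2\alpha - 1 > 0$ means $\delta^{2\alpha-1} \to 0$. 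Thus the right-hand side $1 + \omega(\delta)/\delta$ dominates trivially, and \eqref{cond_regularite} holds. Of course $\omega$ is continuous, increasing, and $\omega(0) = 0$, so all the structural assumptions on $\omega$ are met.

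Next I would address the existence of the dominating function $f \in \cD$ satisfying \eqref{cond-regulariteppp-0}, i.e.\ $|f(\zeta)|^2 \leq \omega(d(\zeta,E)) = d(\zeta,E)^{2\alpha}$ a.e.\ on $\TT$. This is exactly where we invoke the external input cited in the example: by \cite[Corollary 4.2]{EKR}, when $E$ is a Carleson set and $\sigma(t) = t^{\alpha}$ with $\alpha > 1/2$, the outer function $f_{\sigma,E}$ belongs to $\cD$, and by construction its boundary values satisfy $|f_{\sigma,E}^*(\zeta)| = d(\zeta,E)^{\alpha}$ a.e.\ on $\TT$, hence $|f_{\sigma,E}^*(\zeta)|^2 = d(\zeta,E)^{2\alpha} = \omega(d(\zeta,E))$. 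So we take $f = f_{\sigma,E}$, which is nonzero (it is outer), and \eqref{cond-regulariteppp-0} holds with equality.

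Finally I would translate the conclusion of Theorem~\ref{module} into the stated form. Theorem~\ref{module} says every Blaschke sequence $(z_n)$ with $\sum_n \omega(2 d(z_n, E)) < \infty$ is a zero set for $\cD$. Writing $z_n = r_n e^{i\theta_n}$ and using $\omega(2t) = 2^{2\alpha} \omega(t)$, the condition becomes $\sum_n d(z_n, E)^{2\alpha} < \infty$ up to the harmless constant $2^{2\alpha}$. To replace $d(z_n, E)$ by $d(e^{i\theta_n}, E)$ I would use the elementary inequality $d(z, E) \leq (1 - |z|) + d(z/|z|, E)$ noted in the statement of the example, together with the subadditivity-type bound $(a+b)^{2\alpha} \leq C_\alpha(a^{2\alpha} + b^{2\alpha})$; since $(z_n)$ is already assumed Blaschke, $\sum_n (1 - r_n)^{2\alpha} < \infty$ whenever $2\alpha \geq 1$ (as $(1-r_n)^{2\alpha} \leq (1-r_n)$ eventually), so $\sum_n d(e^{i\theta_n}, E)^{2\alpha} < \infty$ forces $\sum_n d(z_n, E)^{2\alpha} < \infty$, and the corollary follows.

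I do not anticipate a genuine obstacle here: the corollary is essentially a specialization of Theorem~\ref{module}, and the only nontrivial ingredient—existence of the $\cD$-function dominated by $d(\cdot,E)^{\alpha}$ on the boundary—is imported verbatim from \cite[Corollary 4.2]{EKR}. The one point requiring a little care is the passage from $d(z_n,E)$ to $d(e^{i\theta_n},E)$ in the summability condition, but this is purely a matter of elementary inequalities exploiting the a priori Blaschke hypothesis, so it amounts to a routine estimate rather than a real difficulty.
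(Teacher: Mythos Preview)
Your proposal is correct and follows exactly the paper's approach: the paper deduces Corollary~\ref{cor1} from Theorem~\ref{module} with $\omega(t)=t^{2\alpha}$, citing \cite[Corollary~4.2]{EKR} for $f_{\sigma,E}\in\cD$ and invoking the inequality $d(z,E)\leq(1-|z|)+d(z/|z|,E)$ to pass from $d(z_n,E)$ to $d(e^{i\theta_n},E)$. You have simply filled in the routine details the paper leaves implicit.
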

Let $\mathcal{A}^\infty = \mathcal{C}^\infty(\TT)\cap\Hol(\DD)$. Compare this with \cite[Theorem 3.2]{ms-4} which says, roughly, that if \eqref{Carlesonzero} holds then $E$ is a Carleson set and, moreover, the same result holds if one replaces $\mathcal{D}$ by $\mathcal{A}$.  As a very specific situation of Corollary \ref{cor1}, let $(r_n)_{n \geq 1}$ be any Blaschke sequence, and let $(e^{i\theta_n})_{n \geq 1}$ be any subset of $E$. Put $z_n = r_n e^{i\theta_n}$, $n \geq 1$. In particular, $e^{i\theta_n}$ maybe the end point of a complementary interval of $E$. Then \eqref{Carlesonzero} is clearly satisfied and thus $(r_ne^{i\theta_n})_{n \geq 1}$ is a zero set for $\cD$.\\

\noindent {\bf Example 2:}
Let $\sigma (t)= e^{-1/t^\gamma}$, with $0<\gamma<1$, and let $E$ be a subset set of $\TT$ such that
\begin{equation} \label{E:t-alpha-inty}
\int_{\T} \, \frac{|d\zeta|}{d(\zeta,E)^{\gamma}}<\infty.
\end{equation}
This condition  is also equivalent to $|E|=0$ and
$
\sum_{n=1}^{\infty} |I_n|^{1-\gamma} <\infty,
$
where $(I_n)_{n \geq 1}$ are the complementary intervals of $E$ \cite{EKR0}. In the case of generalized Cantor sets, \eqref{E:t-alpha-inty} becomes equivalent to
\begin{equation}\label{Cantorr}
\sum_{n=1}^\infty \, 2^n \, \lambda_n^{1-\gamma}  < \infty.
\end{equation}
If $E$ is perfect symmetric set with the constant ratio $\ell$, then, by \eqref{E:form-lam}, the condition \eqref{Cantorr} holds if and only if
\[
0 < \gamma < 1+\frac{\log2}{\log \ell}.
\]

Now if   \eqref{E:t-alpha-inty} holds, then $f_{\sigma,E}\in \mathcal{A}^\infty \subset \cD$. The condition \eqref{E:t-alpha-inty} shows that $E$ is a special type of Carleson set. But, for such a sacrifice, we get more freedom for zeros. More precisely, we have the following result.

\begin{cor}\label{cor2}
Let $E$ fulfill \eqref{E:t-alpha-inty}. Then each Blaschke sequence $(z_n)_{n\geq 1}$ satisfying
\begin{equation} \label{E:zero-dist-alpa}
\sum_{n=1}^\infty \, e^{-2/d(z_n,E)^\gamma}  < \infty
\end{equation}
is a zero set for $\cD$.
\end{cor}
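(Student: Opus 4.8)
The plan is to deduce Corollary \ref{cor2} directly from Theorem \ref{module} with the specific choice $\omega(t) = e^{-2/t^\gamma} = \sigma(t)^2$, where $\sigma(t) = e^{-1/t^\gamma}$. Two things must be checked: first, that this $\omega$ is admissible for Theorem \ref{module}, namely that it is continuous, increasing, vanishes at $0$, and satisfies the regularity condition \eqref{cond_regularite}; and second, that there is a nonzero $f \in \cD$ with $|f(\zeta)|^2 \le \omega(d(\zeta,E))$ a.e.\ on $\TT$. Once these are in place, \eqref{E:zero-dist-alpa} is precisely the hypothesis $\sum_n \omega(2d(z_n,E)) < \infty$ of Theorem \ref{module}, up to adjusting constants in the exponent; in fact since we may equally well run the argument with $\omega(t) = e^{-c/t^\gamma}$ for any fixed $c>0$, the factor of $2$ is harmless.

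For the existence of $f$, I would take $f = f_{\sigma,E}$, the outer function with $|f_{\sigma,E}^*(\zeta)| = \sigma(d(\zeta,E)) = e^{-1/d(\zeta,E)^\gamma}$ a.e., as displayed in Section \ref{S:examp}. The excerpt already asserts that under hypothesis \eqref{E:t-alpha-inty} one has $f_{\sigma,E} \in \mathcal{A}^\infty \subset \cD$; this is the content of the cited results \cite{EKR0, EKR}, so I would simply invoke it. (The point is that the modulus of smoothness of $\log|f_{\sigma,E}^*|$ on $\TT$, controlled via $d(\zeta,E)^{-\gamma}$ being integrable, forces $f_{\sigma,E}$ to be smooth up to the boundary.) Then $|f(\zeta)|^2 = \sigma(d(\zeta,E))^2 = \omega(d(\zeta,E))$ a.e., so \eqref{cond-regulariteppp-0} holds with equality.

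The regularity condition \eqref{cond_regularite} is the one calculation worth doing carefully, and I expect it to be the main (mild) obstacle. With $\omega(t) = e^{-2/t^\gamma}$ one has $\omega(t)/t^2 = t^{-2} e^{-2/t^\gamma}$, and near $t = 0$ this decays super-polynomially fast, so $\int_\delta^2 \omega(t)/t^2\,dt$ is dominated by the behaviour near the upper end of a small neighbourhood of $\delta$. A change of variables $u = 1/t^\gamma$, or simply the crude bound that for $t \in [\delta, 2\delta]$ the integrand is at most $\delta^{-2} e^{-2/(2\delta)^\gamma}$ while the tail $\int_{2\delta}^2$ is $O(1)$ plus a term comparable to $\delta^{-1}e^{-2/(2\delta)^\gamma}$, shows that $\int_\delta^2 \omega(t)/t^2\,dt = O(1) + O\!\big(\omega(\delta)/\delta\big)$ as $\delta \to 0$; more directly, since $\omega(t)/t$ is increasing near $0$ (its logarithmic derivative is $\gamma t^{-\gamma-1}\cdot 2 - t^{-1} > 0$ for small $t$) one gets $\int_\delta^2 \omega(t)t^{-2}\,dt \le \int_\delta^2 \frac{\omega(t)/t}{t}\,dt$ and the increasing factor can be pulled appropriately, or one splits as in the Remark following the proof of Theorem \ref{capacity}. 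In any case \eqref{cond_regularite} holds for this $\omega$.

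With \eqref{cond_regularite} and \eqref{cond-regulariteppp-0} verified, Theorem \ref{module} applies verbatim: every Blaschke sequence $(z_n)_{n\ge1}$ with $\sum_n \omega(2d(z_n,E)) < \infty$ is a zero set for $\cD$. Since $\omega(2d(z_n,E)) = e^{-2/(2d(z_n,E))^\gamma} \le e^{-2/d(z_n,E)^\gamma}$ (as $2d(z_n,E) \ge d(z_n,E)$ and $\omega$ is increasing — wait, this inequality goes the wrong way), I would instead note that $\omega(2t) = e^{-2/(2t)^\gamma} = e^{-2^{1-\gamma}/t^\gamma}$, and since $2^{1-\gamma} > 0$ the series $\sum_n e^{-2^{1-\gamma}/d(z_n,E)^\gamma}$ converges whenever $\sum_n e^{-2/d(z_n,E)^\gamma}$ does (one may re-scale $\sigma$ at the outset: take $\sigma(t) = e^{-2^{\gamma-1}/t^\gamma}$, i.e.\ absorb the constant, so that $\omega(2d(z_n,E))$ becomes exactly $e^{-2/d(z_n,E)^\gamma}$; the hypothesis \eqref{E:t-alpha-inty} is unaffected by this constant and $f_{\sigma,E}$ still lies in $\mathcal{A}^\infty$). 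This completes the proof of Corollary \ref{cor2}.
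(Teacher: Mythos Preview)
Your approach is correct and is exactly the paper's intended (implicit) proof: take $f=f_{\sigma,E}\in\mathcal{A}^\infty\subset\cD$ and apply Theorem~\ref{module} with $\omega=\sigma^2$. Two minor cleanups: the regularity check \eqref{cond_regularite} is much simpler than you indicate, since the substitution $u=1/t$ gives $\int_\delta^2 \omega(t)\,t^{-2}\,dt=\int_{1/2}^{1/\delta} e^{-2u^\gamma}\,du\le\int_0^\infty e^{-2u^\gamma}\,du<\infty$, so the integral is in fact $O(1)$; and your rescaling constant should be $c=2^{\gamma}$ rather than $2^{\gamma-1}$, so that with $\sigma(t)=e^{-2^\gamma/t^\gamma}$ one gets $\omega(2d)=e^{-2^{\gamma+1}/(2d)^\gamma}=e^{-2/d^\gamma}$ exactly.
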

The case $E=\{1\}$ of this result was treated in \cite{ms-4}.  As a very specific case of Corollary \ref{cor2}, let $\varepsilon_n=n^{-\frac{1+\gamma}{1-\gamma}}$, and let $E=\{e^{i\varepsilon_n}\}\cup \{1\}$. Then $E$ satisfies  \eqref{E:t-alpha-inty}.  Put
 $z_{n,k}=r_{n,k}e^{i\theta_{n,k}}$ with
\[
 1-r_{n,k}=n^{-n}k^{-k} \quad \mbox{and} \quad \theta_{n,k}= \varepsilon_n+\frac{\varepsilon_{n-1}-\varepsilon_n}{2(\log k)^{2/\gamma}}.
\]
Simple computation shows  that  $(z_{n,k})_{n,k\geq 2}$ is Blaschke sequence satisfying  \eqref{E:zero-dist-alpa}. Hence, $(z_{n,k})_{k,n \geq 2}$ is a zero set for $\cD$. But,  \eqref{Carlesonzero} does not hold and thus this result cannot be deduced from Corollary \ref{cor1}.\\

\noindent {\bf Example 3:}
Let $E$ a be a closed subset of $\TT$ such that
\begin{equation} \label{E:condCar000}
\int_0^2 \frac{ds}{|E_s|}=+\infty,
\end{equation}
where $E_s:=\{\zeta\in \TT \; : \; d(\zeta,E)\leq s\}$. The closed subset $E$ satisfying \eqref{E:condCar000} has logarithmic capacity zero,  but the converse is not true. However, if
$E$ is a generalized Cantor set, then \eqref{E:condCar000} holds if and only if $\caap(E)=0$  \cite[Chapter IV, Theorems 2 and 3]{CL}. Note that this is also equivalent to \eqref{capacite-cantor}. Moreover,
\begin{equation}\label{condcapa}
\caap(E_t)\leq \left(\int_t^2 \frac{ds}{|E_s|} \right)^{-1}.
\end{equation}

\begin{rem}\label{rem}
{\rm Let $\eta(t)=2\log\log\log (1/t)$ and let $E$ be a closed subset of $\TT$ such that
\begin{equation}
\label{E-t-integral}
\int_0 {|E_t|} \, \eta'(t)^2 \, dt<\infty.
\end{equation}
For example, the generalized Cantor set $E$ with $\ell_n=e^{-2^{n}/n^\sigma}$ satisfies this property \cite[Theorem 9.1]{EKR0}, and, by  \eqref{capacite-cantor}, $\caap(E)=0$. Since $E$ satisfies \eqref{E-t-integral}, by \cite[Proposition A.4]{EKR0}, we have
\[
\int_0\caap(E_t) \frac{\log\log\log(1/t)}{t\log(1/t)\log\log(1/t)} dt<\infty.
\]
Hence  $\caap(E_t) \longrightarrow 0$ as $t \longrightarrow 0$, $\caap(E)=0$. Therefore, by \eqref{eta},
\[
\psi^{-1}(\caap (E_t))=\eta(t)=2\log\log\log(1/t)
\]
and
\[
\omega(t)=\exp\big(-e^{\psi^{-1}(\caap (E_t))}\big)\leq \frac{1}{(\log1/t)^2}.
\]
Hence,
\[
\delta\int_\delta\frac{ \omega(t)}{t^2} \, dt \leq \frac{c}{(\log 1/\delta)^2}=o\Big(\frac{1}{\log 1/\delta}\Big), \qquad \delta \longrightarrow 0.
\]
According to Theorem \ref{capacity}, there is a zero set $(z_{n})_{n \geq 1}$ which satisfies
\[
\sum_{n=1}^{\infty} \frac{1}{\log^2 (1-|z_n|)}<\infty\quad\text{ but } \quad \sum_{n=1}^{\infty} \frac{1}{|\log( 1-|z_n|)|}=\infty,
\]
and the set of its accumulation points on $\TT$ is precisely equal to $E$. Therefore, \eqref{blas} holds
 but the zero sequence does not fulfil the Shapiro--Shields condition \eqref{E:suuf-z-ss}.}
\end{rem}

\begin{cor}\label{cor3}
Let $E$ be any closed subset of $\TT$ satisfying \eqref{E:condCar000}. Then each Blaschke sequence $(z_n)_{n\geq 1}$ satisfying
\[
\sum_{n=1}^\infty \exp \left\{ -\left( \int_{2d(z_n,E)}^{2} \frac{ds}{|E_s|}\right)^\alpha \right\} < \infty,
\]
for some $\alpha\in (0,1/2)$, is a zero set for $\cD$.
\end{cor}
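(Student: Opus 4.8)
To prove Corollary~\ref{cor3}, the plan is to deduce it from Theorem~\ref{module} with a well-chosen $\omega$. Put
\[
G(t)=\int_t^2\frac{ds}{|E_s|},\qquad 0<t\le 2;
\]
this is finite (since $|E_s|\ge|E_t|>0$ for $s\ge t$), $G$ is continuous and strictly decreasing, and $G(t)\to+\infty$ as $t\to0^+$ by \eqref{E:condCar000}. Set $\omega(t)=\exp\big(-G(t)^\alpha\big)$, $0\le t\le2$. Then $\omega$ is continuous and increasing on $[0,2]$, $\omega(0)=0$, and $\omega\big(2d(z_n,E)\big)=\exp\big\{-\big(\int_{2d(z_n,E)}^2 ds/|E_s|\big)^\alpha\big\}$, so the hypothesis of the corollary is exactly $\sum_n\omega(2d(z_n,E))<\infty$. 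It therefore remains to verify the two standing hypotheses of Theorem~\ref{module} for this $\omega$: the regularity condition \eqref{cond_regularite}, and the existence of a nonzero $f\in\cD$ with $|f(\zeta)|^2\le\omega(d(\zeta,E))$ a.e.\ on $\T$.

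For \eqref{cond_regularite} the point is that $\omega$ is doubling near $0$. Since $x\mapsto x^\alpha$ is concave,
\[
\log\frac{\omega(2t)}{\omega(t)}=G(t)^\alpha-G(2t)^\alpha\le\alpha\,G(2t)^{\alpha-1}\big(G(t)-G(2t)\big).
\]
Here $G(t)-G(2t)=\int_t^{2t}ds/|E_s|\le t/|E_t|$ stays bounded as $t\to0$ (every nonempty closed $E$ has $|E_s|\gtrsim s$ for small $s$), while $G(2t)^{\alpha-1}\to0$ because $G(2t)\to+\infty$ and $\alpha<1$; hence $\omega(2t)/\omega(t)\to1$, so there is $t_0>0$ with $\omega(2t)\le K\,\omega(t)$ for $0<t\le t_0$ and some $K<2$. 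A dyadic splitting of $[\delta,t_0]$ then gives
\[
\int_\delta^{t_0}\frac{\omega(t)}{t^2}\,dt\le\sum_{j\ge0}\frac{\omega(2^{j+1}\delta)}{2^{j}\delta}\le\frac{K\,\omega(\delta)}{\delta}\sum_{j\ge0}\Big(\frac{K}{2}\Big)^{j}\lesssim\frac{\omega(\delta)}{\delta},
\]
while $\int_{t_0}^2\omega(t)t^{-2}\,dt$ is a bounded constant; together these yield \eqref{cond_regularite}.

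It remains to produce $f$, and this is where $\alpha<1/2$ enters. One route is to take $f=f_{\sigma,E}$ to be the outer function with $\sigma=\sqrt\omega$, so that $|f^*(\zeta)|^2=\omega(d(\zeta,E))$ a.e.; its membership in $\cD$ follows, under the mild regularity of $\sigma$ present here, from the outer-function estimates of \cite{EKR}, a condition which in this situation reduces to $\int_\T|\log\sigma(d(\zeta,E))|^2\,|d\zeta|<\infty$, i.e.\ $\int_\T G(d(\zeta,E))^{2\alpha}\,|d\zeta|<\infty$. By a change of variable $u=G(t)$ (using $G'(t)=-1/|E_t|$) this boundary integral equals $2\alpha\int_0^2 G(t)^{2\alpha-1}\,dt$; near $t=2$, $G(t)\asymp 2-t$ so $G^{2\alpha-1}$ is integrable there since $2\alpha-1>-1$, and near $t=0$, $G(t)^{2\alpha-1}\to0$ precisely because $2\alpha-1<0$ — this is exactly where $\alpha<1/2$ is used. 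Alternatively, one may build $f$ as in the proof of Theorem~\ref{capacity}: by \eqref{condcapa}, $\caap(E_t)\le1/G(t)$, so taking $\eta(t)=\alpha\log G(t)$ and substituting $u=G(t)$ gives $\int_0\caap(E_t)\,|d\eta^2(t)|\lesssim\int^{+\infty}u^{-2}\log u\,du<\infty$; \cite[Theorem 5.1]{EKR0} then furnishes $\varphi\in\cD$ with $\Re\varphi(\zeta)\ge\eta(d(\zeta,E))$ and $|\Im\varphi(\zeta)|<\pi/4$ q.e.\ on $\T$, and $f=\exp\big(-\frac{\sqrt2}{2}e^{\varphi}\big)$ then lies in $\cD$, is not identically zero, and satisfies $|f(\zeta)|^2\le\exp\big(-e^{\Re\varphi(\zeta)}\big)\le\exp\big(-G(d(\zeta,E))^\alpha\big)=\omega(d(\zeta,E))$ q.e., hence a.e. In either case Theorem~\ref{module} applies and completes the proof.

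The main obstacle is the construction of $f$: one needs a nonzero Dirichlet function whose boundary modulus is pinched by $\omega(d(\cdot,E))$ — not merely a bounded or Hardy-space function — which forces an appeal to a boundary-value estimate such as the outer-function bounds of \cite{EKR} or the converse capacity estimate \cite[Theorem 5.1]{EKR0}, and it is in carrying this through that the hypothesis $\alpha<1/2$ is needed. Verifying \eqref{cond_regularite} is then routine once the doubling of $\omega$ is noticed, as is the elementary identification of the two series.
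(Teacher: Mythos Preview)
Your proof is correct and follows the paper's strategy: set $\omega(t)=\exp(-G(t)^\alpha)$ with $G(t)=\int_t^2 ds/|E_s|$, verify \eqref{cond_regularite}, build $f\in\cD$ via \cite[Theorem~5.1]{EKR0}, and invoke Theorem~\ref{module}. The differences are in the execution. For \eqref{cond_regularite} you argue by doubling and a dyadic sum, whereas the paper integrates by parts and uses $t=O(|E_t|)$ to show the remainder is $o$ of the left side; both are short. For the construction of $f$, the paper takes $\eta_{\alpha,E}(t)=G(t)^\alpha$ and $f=e^{-\varphi/2}$, and it is exactly here that $\alpha<1/2$ enters: $\int_0\caap(E_t)\,|d\eta_{\alpha,E}^2(t)|\lesssim\int_0 ds\big/\big(|E_s|\,G(s)^{2-2\alpha}\big)$, finite by \eqref{E:condCar000} when $2-2\alpha>1$. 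Your alternative takes $\eta(t)=\alpha\log G(t)$ and $f=\exp(-\tfrac{\sqrt2}{2}e^{\varphi})$ as in the proof of Theorem~\ref{capacity}; this route happens not to use $\alpha<1/2$ at all (so your closing remark that $\alpha<1/2$ is needed for $f$ applies only to your first route). That first route, via the outer function $f_{\sigma,E}$, is not fully justified as written --- the assertion that \cite{EKR} reduces $f_{\sigma,E}\in\cD$ to $\int_\T|\log\sigma(d(\zeta,E))|^2\,|d\zeta|<\infty$ is not a direct citation --- but since your second route is complete this is harmless. One small sloppiness: the iterated bound $\omega(2^{j+1}\delta)\le K^{j+1}\omega(\delta)$ only holds while $2^j\delta\le t_0$; truncate the sum there and absorb the tail into the constant $\int_{t_0}^2\omega(t)t^{-2}\,dt$.
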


\begin{proof}
Since  \eqref{E:condCar000} holds, for each $\beta>0$, we have
\begin{equation}\label{conditionII}
\int_0\frac{ds}{|E_s|(\int_s \frac{dt}{|E_t|})^{1+\beta}}<+\infty.
\end{equation}
Let
\[
\eta_{\alpha,E}(t)=\left(\int_t\frac{ds}{|E_s|}  \right)^{\alpha}
\]
and
\[
\omega(t) = e^{- \eta_{\alpha,E}(t)}.
\]
It clear that $\eta_{\alpha,E}$ is decreasing function and  $\eta_{\alpha,E}(t) \longrightarrow \infty$ as $t \longrightarrow 0$.  Since  $\alpha\in(0,1/2)$, by  \eqref{condcapa} and \eqref{conditionII}, we get
\[
\int_0^2\caap(E_t) \, d\eta_{\alpha,E}^2(t)>-\infty.
\]
Arguing as in the proof of Theorem \ref{capacity}, there exists a function $\varphi\in \cD$ such that
\[
\Re \varphi(\zeta) \geq \eta_{\alpha,E} (d(\zeta , E)) \quad \text{ and } \quad |\Im \varphi (\zeta)|<\pi /4, \qquad  \text{q.e. on \;\;}  \TT.
\]
Define $f =  e^{-\varphi/2}$. We have $f\in \cD$ and $|f(z)|^2\leq \omega(d(z,E))$. Moreover, $\omega$ satisfies \eqref{cond_regularite}. As a matter of fact,  since  $t=O(|E_t|)$ and  $E$ satisfies \eqref{E:condCar000}, integration by parts gives
\begin{eqnarray*}
\int_\delta \frac{\omega(t)}{t^2}dt&=&\frac{\omega(\delta)}{\delta}+ \alpha\int_\delta\frac{1}{|E_t|\big(\int_t \frac{ds}{|E_s|}\big)^{1-\alpha}}\frac{\omega(t)}{t}dt\\
&\leq&\frac{\omega(\delta)}{\delta}+ o\Big( \int_\delta \frac{\omega(t)}{t^2}dt\Big)\\
\end{eqnarray*}
 and the result follows from Theorem \ref{module}.

\end{proof}

\section{Exceptional set of Blaschke products}

Let $B$ be the Blaschke product formed with the sequence $Z=(z_n)_{n\geq 1}$. According to a result of Frostman, $B$ has a finite angular derivative
in the sense of Carath\'eodory at $\zeta\in \TT$ if and only if
\[
\sum_{n=1}^{\infty} \frac{1-|z_n|^2}{|\zeta-z_n|^2}<\infty.
\]
See \cite{Fros-42, Cargo} or \cite[Theorem 1.7.12]{CMR}. Moreover, at such a point,
\[
|B'(\zeta)|= \sum_{n=1}^{\infty} \frac{1-|z_n|^2}{|\zeta-z_n|^2}.
\]
That is why we define the {\em exceptional set}
\[
\mathcal{E}(Z)=\Big\{\zeta\in \TT\text{Ê: } \sum_{n=1}^{\infty} \frac{1-|z_n|^2}{|\zeta-z_n|^2}=\infty\Big\}.
\]
The exceptional set can be quiet large. As a matter of fact, Frostman provided an example so that $\mathcal{E}(Z)=\TT$. However, if $Z=(z_n)_{n\geq 1}$ is a zero set for the Dirichlet space $\cD$, then, by Carleson's formula \eqref{Carleson-formula} and Jensen's inequality, we have
\begin{equation} \label{E:mf-f-0000}
\int_\TT \log \Big(\sum_{n=1}^{\infty} \frac{1-|z_n|^2}{|\zeta-z_n|^2}\Big)|d\zeta|<\infty.
\end{equation}
Hence, the exceptional set $\mathcal{E}(Z)$ must have the Lebesgue measure zero.

Given a measurable function $f$ on $\TT$, the associated distribution function $m_f$ is given  by
\[
m_f(\lambda)=|\{\zeta\in \TT\text{ : } |f(\zeta)|>\lambda\}|,\qquad \lambda\geq 0.
\]
If $ |f|\geq 1$ on $\TT$, then
\begin{equation} \label{E:mf-f}
\int_{\TT} \log|f(\zeta)| \, |d\zeta| = \int_{1}^{\infty} \frac{m_f(\lambda)}{\lambda} \, d\lambda.
\end{equation}
See \cite[Page 347]{m09}. Parallel to this definition and to better study $\mathcal{E}(Z)$, let us define
\[
\mathcal{E}_\lambda(Z)=\left\{\zeta\in \TT\text{ : } \sum_{n=1}^{\infty} \frac{1-|z_n|^2}{|\zeta-z_n|^2 }\geq \lambda \right\}.
\]
Note that
\[
\inf_{\zeta\in \TT} \sum_{n=1}^{\infty} \frac{1-|z_n|^2}{|\zeta-z_n|^2} \geq \frac{1}{4}
 \sum_{n=1}^{\infty} (1-|z_n|^2)= \lambda_0>0.
\]
For simplicity, we assume that $\lambda_0\geq1$. Moreover,
\[
\lim_{\lambda \to +\infty} |\mathcal{E}_\lambda(Z)|= |\mathcal{E}(Z)|,
\]
and thus the latter is zero provided that the limit is equal to zero. However, by \eqref{E:mf-f},
\[
\int_\TT \log \Big(\sum_{n=1}^{\infty} \frac{1-|z_n|^2}{|\zeta-z_n|^2}\Big)|d\zeta|
=
\int_{1}^{\infty }\frac{|\mathcal{E}_\lambda(Z)|}{\lambda} \, d\lambda.
\]
This identity shows that, if \eqref{E:mf-f-0000} holds, then $|\mathcal{E}_\lambda(Z)|$ must tend to zero, although perhaps slowly.

We can say more about the size of this set,  if the zero set $(z_n)_{n \geq 1}$ satisfies the Shapiro-Shields condition  \eqref{E:suuf-z-ss}.

\begin{prop}
If $Z=(z_n)_{n\geq 1}$ satisfies the Shapiro-Shields condition \eqref{E:suuf-z-ss}, then the logarithmic capacity of $\mathcal{E}(Z)$ is zero.
\end{prop}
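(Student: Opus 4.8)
The plan is to prove the (formally stronger) assertion that, for \emph{any} sequence $Z=(z_n)$ with $\sum_n 1/|\log(1-|z_n|)|<\infty$, the set
\[
\mathcal{E}(Z)=\Big\{\zeta\in\TT:\ V(\zeta):=\sum_{n\ge 1}\frac{1-|z_n|^2}{|\zeta-z_n|^2}=\infty\Big\}
\]
has logarithmic capacity zero, using only elementary potential theory and not the fact that $Z$ is a zero set. Write $w_n=1-|z_n|$ and $\zeta_n=z_n/|z_n|$, and note that $\sum_n 1/|\log w_n|<\infty$ forces $|\log w_n|\to\infty$ so quickly that $\sum_n w_n<\infty$, so after discarding finitely many indices I may take every $w_n$ as small as I wish. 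The argument rests on one classical fact: the logarithmic potential $U^{\rho}(\zeta)=\int_\TT\log\frac1{|\zeta-\xi|}\,d\rho(\xi)$ of a finite positive measure $\rho$ on $\TT$ satisfies $\caap\big(\{U^{\rho}=\infty\}\big)=0$ --- otherwise, integrating $U^{\rho}$ against the equilibrium measure of a compact $K\subset\{U^{\rho}=\infty\}$ with $\caap(K)>0$ (which has bounded potential) and using Fubini yields $\infty\le \mathrm{const}\cdot\rho(\TT)<\infty$. Since $U^{\rho_1}+U^{\rho_2}=U^{\rho_1+\rho_2}$, it is enough to dominate $V$, on the set where it is infinite, by the potential of a finite measure; I do this via the $\zeta$-dependent splitting $V=V_{\mathrm{near}}+V_{\mathrm{far}}$, where $V_{\mathrm{near}}(\zeta)$ is the sum over the $n$ with $|\zeta-\zeta_n|<\sqrt{w_n}$ and $V_{\mathrm{far}}(\zeta)$ over the rest.

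For $V_{\mathrm{far}}$: when $|\zeta-\zeta_n|\ge\sqrt{w_n}$ one has $|\zeta-z_n|\ge\tfrac12|\zeta-\zeta_n|$, hence $\dfrac{1-|z_n|^2}{|\zeta-z_n|^2}\le\dfrac{8\,w_n}{|\zeta-\zeta_n|^2}$, and the key elementary inequality is that, because $t\mapsto t^2\log\frac1t$ increases on $(0,e^{-1/2})$,
\[
\frac{w_n}{t^2}\ \le\ \frac{2}{|\log w_n|}\,\log\frac1t\qquad\big(\sqrt{w_n}\le t\le e^{-1/2}\big),
\]
with equality at $t=\sqrt{w_n}$. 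Summing over the $n$ with $\sqrt{w_n}\le|\zeta-\zeta_n|\le e^{-1/2}$, and using $\log^{+}\frac1t\le\log\frac1t+\log2$ on $(0,2]$, bounds that portion of $V_{\mathrm{far}}$ by $U^{\nu}(\zeta)+\mathrm{const}$, where $\nu=16\sum_n\frac1{|\log w_n|}\,\delta_{\zeta_n}$ is a \emph{finite} measure precisely because of the Shapiro--Shields hypothesis; the indices with $|\zeta-\zeta_n|>e^{-1/2}$ contribute at most $8e\sum_n w_n<\infty$. Thus $V_{\mathrm{far}}\le U^{\nu}+\mathrm{const}$ on $\TT$, so $\{V_{\mathrm{far}}=\infty\}\subset\{U^{\nu}=\infty\}$.

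For $V_{\mathrm{near}}$: each summand satisfies $\frac{1-|z_n|^2}{|\zeta-z_n|^2}\le 2/w_n<\infty$ on $\TT$, so $V_{\mathrm{near}}(\zeta)=\infty$ forces $\zeta$ to lie in infinitely many of the arcs $I_n=\{\zeta\in\TT:|\zeta-\zeta_n|<\sqrt{w_n}\}$; that is, $\{V_{\mathrm{near}}=\infty\}\subset\limsup_n I_n$. Here $|I_n|\asymp\sqrt{w_n}$, so $\sum_n 1/\big|\log|I_n|\big|\asymp\sum_n 1/|\log w_n|<\infty$, and I set $\sigma=\sum_n \big|\log|I_n|\big|^{-1}\sigma_n$, with $\sigma_n$ the normalized arclength measure on $I_n$, so $\sigma(\TT)<\infty$. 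A one-line computation gives $U^{\sigma_n}(\zeta)\ge\big|\log|I_n|\big|$ for $\zeta\in I_n$ and $U^{\sigma_n}\ge-\log2$ everywhere, hence $U^{\sigma}(\zeta)\ge\#\{n:\zeta\in I_n\}-\log2\cdot\sigma(\TT)=\infty$ on $\limsup_n I_n$, so $\{V_{\mathrm{near}}=\infty\}\subset\{U^{\sigma}=\infty\}$.

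Combining the two steps, $\mathcal{E}(Z)=\{V=\infty\}\subset\{V_{\mathrm{near}}=\infty\}\cup\{V_{\mathrm{far}}=\infty\}\subset\{U^{\sigma+\nu}=\infty\}$ with $\sigma+\nu$ a finite positive measure, whence $\caap(\mathcal{E}(Z))=0$ by the fact recalled above. I expect the only slightly delicate points to be the verification of the calculus inequality for $w_n/t^2$ over the entire admissible range of $t$ (one must separate off the range where $|\zeta-\zeta_n|$ is bounded away from $0$ and absorb it into the convergent Blaschke sum) and the lemma that logarithmic potentials of finite measures are finite quasi-everywhere, which is the single external ingredient I would cite; everything else is routine bookkeeping.
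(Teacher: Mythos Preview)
Your proof is correct, but it takes a considerably more elaborate route than the paper's. The paper argues by a direct Borel--Cantelli: it chooses arcs $I_{z_n}$ centered at $z_n/|z_n|$ of arclength $\bigl((1-|z_n|)\,|\log(1-|z_n|)|\bigr)^{1/2}$ (larger than your $\sqrt{w_n}$), so that $\zeta\notin I_{z_n}$ already forces $\dfrac{1-|z_n|^2}{|\zeta-z_n|^2}\le \dfrac{C}{|\log(1-|z_n|)|}$; the Shapiro--Shields condition then gives $\mathcal{E}(Z)\subset\bigcup_{n\ge N}I_{z_n}$ for every $N$, and one finishes with countable subadditivity of capacity together with $\caap(I)\asymp 1/\log(1/|I|)$. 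The point is that this clever arc size makes your ``far'' part superfluous: outside the paper's arcs the tail of $V$ is bounded by the convergent Shapiro--Shields sum uniformly in $\zeta$, so there is no need to dominate it by a logarithmic potential. Your decomposition into $V_{\mathrm{near}}+V_{\mathrm{far}}$, with $V_{\mathrm{far}}$ controlled by the potential of the atomic measure $\nu$ and $V_{\mathrm{near}}$ by the diffuse measure $\sigma$, reduces everything to the single principle that $\{U^{\rho}=\infty\}$ is polar for finite $\rho$; this is a clean potential-theoretic packaging that would transfer more readily to other kernels, but for the logarithmic capacity on $\TT$ the paper's three-line argument is shorter and uses nothing beyond subadditivity and the capacity of an arc.
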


\begin{proof}
Let $I_{z}$ be the open  arc of $\TT$ with midpoint $z/|z|$ and arclength
\[
|I_{z}|= \bigg( \, (1-|z|) \, \log 1/(1-|z|) \bigg)^{1/2}.
\]
We now verify that, for each $N \geq 1$,
\[
\mathcal{E}(Z) \subset \bigcup_{n\geq N} |I_{z_n}|.
\]
As a matter of fact, $\zeta\notin  |I_{z}|$ implies
\[
\frac{1-|z|^2}{|\zeta-z|^2} \leq\frac{4}{|\log( 1-|z|)|}.
\]
Thus, if $\zeta\notin  |I_{z_n}|$ for all $n\geq N$, then, by  \eqref{E:suuf-z-ss}, we get $\zeta\notin \mathcal{E}(Z)$. Hence,
\begin{eqnarray*}
\caap(\mathcal{E}(Z))&\leq& \caap \Big(\bigcup_{n\geq N} |I_{z_n}|\Big)\\
&\leq &\sum_{n\geq N}
\caap(I_{z_n})\\
&\leq & \sum_{n\geq N}\frac{c_1}{\log 1/|I_{z_n}|}\\
&\leq  &\sum_{n\geq N}\frac{c_2}{|\log(1-|z_n|)|}.
\end{eqnarray*}
with $c_1,c_2$ independent of $N$. Now, let $N \longrightarrow \infty$. By the Shapiro--Shields condition, the last expression tends to zero.
\end{proof}

\begin{prop} \label{p2}
There exists a zero set $(z_n)_{n\geq 1}$  in Dirichlet space $\cD$ such that
\[
\caap(\mathcal{E}(Z))>0.
\]
\end{prop}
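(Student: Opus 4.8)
The plan is to produce a zero set for $\cD$ whose exceptional set $\mathcal{E}(Z)$ has positive logarithmic capacity, by choosing a closed set $E \subset \TT$ with $\caap(E)>0$ which is nonetheless "small enough" to carry a Dirichlet function vanishing like a power of $d(\zeta,E)$, and then placing zeros along rays toward $E$. The natural candidate is a generalized Cantor set (or perfect symmetric set) $E$ with constant ratio $\ell \in (0,1/2)$ chosen close to $1/2$, so that on the one hand $\caap(E)>0$ — by \eqref{capacite-cantor} this requires $\sum 2^{-n}\log(1/\ell_n) < \infty$, which holds since $\ell_n = \ell^n$ gives $\sum 2^{-n} n \log(1/\ell) < \infty$ — and on the other hand $E$ is a Carleson set, by \eqref{E:carl-cant}, since $\sum 2^n \lambda_n \log(1/\lambda_n) = (1-2\ell)\sum 2^n \ell^{n-1}\log(1/\lambda_n) < \infty$ whenever $2\ell < 1$.

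First I would invoke Example 1: since $E$ is a Carleson set, for any $\alpha > 1/2$ the outer function $f_{\sigma,E}$ with $\sigma(t) = t^\alpha$ lies in $\cD$ and satisfies $|f_{\sigma,E}^*(\zeta)|^2 = d(\zeta,E)^{2\alpha}$ a.e.\ on $\TT$, so the hypothesis \eqref{cond-regulariteppp-0} of Theorem \ref{module} holds with $\omega(t) = t^{2\alpha}$. Fix such an $\alpha$ (say $\alpha = 1$). Next I would choose the zeros: enumerate the endpoints $(e^{i\theta_k})_{k\ge1}$ of the complementary arcs of $E$ (whose closure is $E$), pick a rapidly decreasing sequence $r_{n}\to 1$, and set $z_{n,k} = r_{n,k} e^{i\theta_k}$ for a double enumeration, exactly as in the proof of Theorem \ref{capacity}. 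Then $d(z_{n,k},E) = 1 - r_{n,k}$, so the summability hypothesis $\sum \omega(2 d(z_{n,k},E)) = \sum (2(1-r_{n,k}))^{2\alpha} < \infty$ of Theorem \ref{module} is met provided $(1-r_n)$ decays fast enough (e.g.\ geometrically). By Theorem \ref{module}, $Z = (z_{n,k})$ is a zero set for $\cD$.

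It remains to check that $\caap(\mathcal{E}(Z))>0$, and this is the main point. Because infinitely many zeros accumulate at each endpoint $e^{i\theta_k}$, for any $\zeta$ with $\zeta \neq e^{i\theta_k}$ for all $k$ one has $\sum_{n,k}(1-|z_{n,k}|^2)/|\zeta - z_{n,k}|^2 \ge \sum_{k}\big(\sum_n (1-r_{n,k}^2)\big)/|\zeta - e^{i\theta_k}|^2$ up to a constant, and more importantly I claim $\mathcal{E}(Z) \supseteq E$: indeed for $\zeta \in E$, for each $k$ the ray toward $e^{i\theta_k}$ has $|\zeta - z_{n,k}| \le |\zeta - e^{i\theta_k}| + (1-r_{n,k})$, and since $d(\zeta, E) = 0$ one can choose, for each fixed $k$ with $e^{i\theta_k}$ close to $\zeta$, the tail of the $n$-sum to diverge — more cleanly, fix $k_0$ with $e^{i\theta_{k_0}}$ arbitrarily close to $\zeta$ (possible since $E = \overline{\{e^{i\theta_k}\}}$) and note $\sum_n (1-r_{n,k_0}^2)/|\zeta - z_{n,k_0}|^2 \ge c\sum_n (1-r_{n,k_0})/(|\zeta-e^{i\theta_{k_0}}|^2 + (1-r_{n,k_0})^2)$; choosing the $r_{n,k_0}$ so that infinitely many satisfy $1-r_{n,k_0} \le |\zeta - e^{i\theta_{k_0}}|$ makes this $\gtrsim |\zeta-e^{i\theta_{k_0}}|^{-1}\sum(1-r_{n,k_0}) = \infty$. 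Hence $\zeta \in \mathcal{E}(Z)$, so $E \subseteq \mathcal{E}(Z)$ and $\caap(\mathcal{E}(Z)) \ge \caap(E) > 0$. The delicate step — and where I would spend the most care — is this last verification that $\mathcal{E}(Z) \supseteq E$ (or at least a positive-capacity subset of it) uniformly over $\zeta \in E$; one must be sure the double enumeration and the growth of $(1-r_n)$ are arranged so that for \emph{every} $\zeta \in E$ some subsequence of rays contributes an infinite sum, and the cleanest way is probably to reindex so that along each ray $\theta = \theta_k$ the radii include values $1-r$ comparable to $2^{-j}$ for all large $j$, together with a Stolz-type estimate showing $\sum_{1-r \sim 2^{-j}} (1-r)/(|\zeta-e^{i\theta_k}|^2+(1-r)^2)$ already blows up once $j$ is large relative to $|\zeta - e^{i\theta_k}|$.
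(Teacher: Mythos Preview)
Your construction of the zero set is fine, but the verification that $E\subset\mathcal E(Z)$ has a genuine gap. For a fixed ray $\theta=\theta_{k_0}$ and any $\zeta\in E$ with $\zeta\ne e^{i\theta_{k_0}}$, set $\delta=|\zeta-e^{i\theta_{k_0}}|>0$. Then $|\zeta-z_{n,k_0}|\ge \delta/2$ for all large $n$, so
\[
\sum_{n}\frac{1-|z_{n,k_0}|^2}{|\zeta-z_{n,k_0}|^2}\;\le\;\frac{C}{\delta^{2}}\sum_{n}(1-r_{n,k_0})\;<\;\infty
\]
by the Blaschke condition. Thus no single ray can force $\zeta\in\mathcal E(Z)$; your displayed claim ``$\gtrsim|\zeta-e^{i\theta_{k_0}}|^{-1}\sum(1-r_{n,k_0})=\infty$'' is simply false, since that last series converges. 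Your proposed fix --- placing radii $1-r\sim 2^{-j}$ for all large $j$ along \emph{every} ray --- would indeed give $S_{k}\asymp 1/|\zeta-e^{i\theta_k}|$, but it destroys the Blaschke condition: with infinitely many rays, $\sum_k\sum_j 2^{-j}=\infty$. Truncating the dyadic range on ray $k$ at some $j_k\to\infty$ restores Blaschke but then $S_k\asymp 1/\max(|\zeta-e^{i\theta_k}|,\,2^{-j_k})$, and you have not shown this sums to infinity uniformly over $\zeta\in E$.

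The missing idea, which the paper supplies, is to exploit the hierarchical structure of the Cantor set rather than rays toward fixed boundary points. One places \emph{one} zero $z_{k,l}$ at the centre of each level-$k$ interval $I_{k,l}$ (so $2^k$ zeros at level $k$), with radius tuned so that $(1-|z_{k,l}|)\log\frac{1}{1-|z_{k,l}|}=\ell_k^{\,2}$. Then for every $\zeta\in E$ and every level $k$ there is exactly one such centre within $\ell_k/2$ of $\zeta$, and that single term already contributes
\[
\frac{1-|z_{k,l}|^2}{|\zeta-z_{k,l}|^2}\;\gtrsim\;\frac{1-|z_{k,l}|}{\ell_k^{\,2}}\;=\;\frac{1}{\log\frac{1}{1-|z_{k,l}|}}\;\asymp\;\frac{1}{\log(1/\ell_k)}.
\]
Summing over $k$ diverges provided one imposes the extra condition $\sum_k 1/\log(1/\ell_k)=\infty$ (automatic for $\ell_k=\ell^k$), which is exactly the failure of Shapiro--Shields. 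The zero-set property then follows from Corollary~\ref{cor1} via $\sum_k 2^k\ell_k^{\,2}<\infty$. The moral: you need one zero per scale covering each point of $E$, with the radius matched to that scale; piling a fixed Blaschke tail onto each of countably many rays cannot do this.
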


\begin{proof}
Let $E$ be a Cantor generalized set on $\TT$. Write $E=\bigcap_{k \geq 1} E_k$, where $E_k=\bigcup_{l=1}^{2^k} I_{k,l}$ with $|I_{k,l}|=\ell_k>0$.  We choose the interval length $\ell_k$ such that
\begin{eqnarray}
\ell = \sup_{k \geq 1} \frac{\ell_{k+1}}{\ell_k} &<& \frac{1}{2}, \label{10}\\
\sum_{k \geq 1} 2^{-k} \log 1/\ell_k &<& \infty,  \label{13}\\
\sum_{k \geq 1} \frac{1}{\log 1/\ell_k} &=& \infty. \label{12}
\end{eqnarray}
For example, if $E$ is the triadic Cantor set, then we can choose $\ell_n=1/3^n$. However, in order to provide more examples we just assume that the sequence $(\ell_k)_{k \geq 1}$ satisfies the above three conditions. Let us see why the above conditions are imposed. In the first place, the condition \eqref{10} implies that
\begin{equation}\label{11}
\sum_{k \geq 1} 2^k \ell_k^2 \leq \sum_{k \geq 1} 2^{k} \ell_k \log 1/\ell_k \leqÊ|\log \ell| \sum_{k \geq 1} k (2\ell)^{k}  <\infty
\end{equation}
and, by \eqref{E:rel-ell-lamba} and \eqref{10}, $\lambda_k \asymp \ell_{k-1}$. Hence, by \eqref{E:carl-cant}, $E$ is a Carleson set. Secondly, the condition \eqref{13} is equivalent to
  $\caap(E)>0$ \cite[\S,  IV Theorem 3]{CL}.

Now we can construct our zero sequence. Let $(z_n)_{n\geq 1}= (z_{k,l})_{l,k\geq 1} $  be such that $z_{k,l}/|z_{k,l}|$ is the center of the interval $I_{k,l}$ and
\[
\bigg( \, (1-|z_{k,l}|) \, \log 1/(1-|z_{k,l}|) \bigg)^{1/2} = \ell_k.
\]
Since $t \leq t \log 1/t$, \eqref{11} guaranties that $(z_n)_{n\geq 1}= (z_{k,l})_{l,k\geq 1} $ is a Blaschke sequence. The condition \eqref{12} ensures that the sequence $(z_{k,l})_{l,k\geq 1} $ does not satisfy the Shapiro--Shields condition \eqref{E:suuf-z-ss} and, moreover,
\[
E=\bigcap_{k \geq 1} E_k\subset  \mathcal{E}(Z).
\]
As a matter of fact, if $\zeta\in E$, then, for each $k \geq 1$, there exists a midpoint $z_{k,l}/|z_{k,l}|$ such $|\zeta-z_{k,l}/|z_{k,l}||\leq \ell_k/2$.  Hence,
\[
\frac{1-|z_{k,l}|^2}{|\zeta-z_{k,l}|^2}\leq\frac{2}{\log 1/\ell_k}.
\]
Therefore, by \eqref{12}, we deduce $\zeta\in  \mathcal{E}(Z)$, and thus $\caap( \mathcal{E}(Z))\geq \caap( E)>0$.  To verify that $(z_{k,l})_{l,k\geq 1} $ is zero set in Dirichlet space, note that since
$z_{k,l}/|z_{k,l}|$ is the center of the interval $I_{k,l}$, we have  $d(z_{k,l}/|z_{k,l}|,E)\leq \ell_k/2$.  Hence, with $\sigma(t)=t$, by \eqref{11}, we have
\[
\sum_{k \geq 1} 2^k \, \sigma^2(2 d(z_{k,l}/|z_{k,l}|,E)) \leq \sum_{k \geq 1} 2^k \ell_k^2 < \infty.
\]
Corollary \ref{cor1} now shows that $(z_n)_{n \geq 1}$ is a zero set for $\cD$.
\end{proof}

In the light of Frostman's Theorem \cite{Fros-42}, see also \cite{Cargo}, Proposition \ref{p2} implies that there exists a Blaschke product which fails to have a finite angular derivative on a set of positive logarithmic capacity.

\vspace{1em}

{\bf Acknowledgments:}  The authors are grateful to the
referee for  his valuable  remarks and suggestions.

\end{document}